\newcommand\rank{{\text{rank}\mkern2mu}}
\def\co{\colon\thinspace}
\newcommand{\into}{\hookrightarrow}
\newcommand{\bZ}{\mathbb{Z}}
\newcommand{\bQ}{\mathbb{Q}}
\newcommand{\bR}{\mathbb{R}}
\newcommand{\sL}{\mathcal{L}}
\newcommand{\cutting}{\backslash\!\!\backslash}
\newcommand{\HFhat}{\widehat{\mathit{HF}}}
\newcommand{\CFD}{\widehat{\mathit{CFD}}}
\newtheorem{theorem}{Theorem}
\newtheorem{corollary}[theorem]{Corollary}
\newtheorem{proposition}[theorem]{Proposition}
\newtheorem*{namedtheorem}{\theoremname}
\newcommand{\theoremname}{testing}
\theoremstyle{definition}
\newtheorem{definition}[theorem]{Definition}
\newtheorem{remark}[theorem]{Remark}
\begin{document}

\title{L-spaces, taut foliations, and graph manifolds}

\author{Jonathan Hanselman}
\email{jh66@math.princeton.edu}
\address {Department of Mathematics, Princeton University, Fine Hall, Washington Road, Princeton,
NJ 08540, USA}

\author{Jacob Rasmussen}
\email{J.Rasmussen@dpmms.cam.ac.uk}
\address {Department of Pure Mathematics and Mathematical Statistics,
Centre for Mathematical Sciences, Wilberforce Road, Cambridge CB3 0WB, UK}

\author{Sarah Dean Rasmussen}
\email{S.Rasmussen@dpmms.cam.ac.uk}
\address {Department of Pure Mathematics and Mathematical Statistics,
Centre for Mathematical Sciences, Wilberforce Road, Cambridge CB3 0WB, UK}

\author{Liam Watson}
\email{liam@math.ubc.ca}
\address {Department of Mathematics, University of British Columbia, 1984 Mathematics Road,
Vancouver, BC, V6T 1Z2, Canada }


\thanks{The first author was partially supported by NSF RTG grant DMS-1148490. The second author was partially supported by EPSRC grant EP/M000648/1. The third author was supported by EPSRC grant EP/M000648/1. The fourth author was partially supported by a Marie Curie Career Integration Grant (HFFUNDGRP).}

\begin{abstract} 
If \(Y\) is a closed orientable graph manifold, we show that \(Y\) admits a coorientable taut foliation if and only if \(Y\) is not an L-space. Combined with previous work of Boyer and Clay, this implies that \(Y\) is an L-space if and only if \(\pi_1(Y)\) is not left-orderable. 
\end{abstract}

\maketitle

\section{Introduction}
An L-space is a rational homology sphere $Y$ with simplest possible Heegaard Floer homology,\footnote{We use Floer homology with coefficients in $\bZ/2\bZ$. Other coefficient systems are discussed at the end of the paper.}
in the sense that $\rank\HFhat(Y)= |H_1(Y;\bZ)|$. Ozsv\'ath and Szab\'o have shown, by an argument analogous to one used by Kronheimer and Mrowka in the monopole setting \cite{KM1997}, that the existence of a $C^2$ coorientable taut foliation ensures that $\rank\HFhat(Y)> |H_1(Y;\bZ)|$ \cite{OSz2004}. That is, L-spaces do not admit  $C^2$ coorientable taut foliations. 

For certain classes of manifolds the converse is known to hold. In particular, for Seifert fibered spaces with base orbifold $S^2$, Lisca and Stipsicz have shown that if $Y$ is not an L-space then $Y$ admits a coorientable taut foliation \cite{LS2007}. (In fact, it can be shown that the two conditions are equivalent for all Seifert fibered spaces; see \cite{BGW2013}.) The main result of this note extends Lisca and Stipsicz's result to general graph manifolds. Recall that a graph manifold is a prime three-manifold admitting a JSJ decomposition into pieces admitting Seifert fibered structures.

\begin{theorem}\label{Main Theorem}Let $Y$ be a closed, connected, orientable graph manifold. If $Y$ is not an L-space then $Y$ admits a $C^0$ coorientable taut foliation.  \end{theorem}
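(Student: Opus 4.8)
The plan is to build the taut foliation by cutting $Y$ along its JSJ tori into Seifert fibred pieces $M_1,\dots,M_n$, foliating each piece compatibly, and gluing. First I would dispose of the case $b_1(Y)>0$: then $Y$ is not a rational homology sphere, so it is automatically not an L-space, and since a graph manifold is irreducible (the sole exception, $S^2\times S^1$, being covered by its product foliation) Gabai's theorem already supplies a taut foliation. So assume from now on that $Y$ is a rational homology sphere, and moreover that the JSJ decomposition is nontrivial, the Seifert fibred case being covered by the results quoted in the introduction. Then every JSJ torus separates, the decomposition graph $\Gamma$ is a tree, and each $M_i$ is Seifert fibred over a planar base orbifold (a disc with cone points and holes, with a few small non-orientable exceptions). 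Each boundary torus of $M_i$ carries a distinguished fibre slope, and choosing a slope $\gamma_T$ on each JSJ torus $T$ determines for every piece a tuple of boundary slopes together with their positions relative to the fibre slopes.

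Next I would set up the local foliation theory of a Seifert piece $M$. Two basic families of co-oriented taut foliations are available: \emph{vertical} ones, pulled back from a Reebless singular foliation of the base orbifold (these exist freely since the base has boundary), tangent to each boundary torus along the fibre slope; and \emph{horizontal} ones, transverse to the circle fibres, which exist precisely when the Seifert data satisfy the Eisenbud--Hirsch--Neumann / Jankins--Neumann / Naimi inequalities and which cut each boundary torus in a linear foliation whose slope moves with a real parameter, all boundary slopes being coupled through the Euler number relation. Modifying a horizontal foliation in a collar of a boundary torus interpolates its boundary slope toward the fibre slope and can convert transverse boundary behaviour to tangential. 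Packaging this yields for each $M$ a \emph{realisation set} $\mathcal{R}(M)$ of admissible tuples --- boundary slopes together with transverse/tangential type and co-orientation. The key input from Lisca--Stipsicz, in bordered form, is that the tuples \emph{not} in $\mathcal{R}(M)$ are, modulo the coupling constraint, exactly those that Dehn-fill $M$ to an L-space; equivalently $\mathcal{R}(M)$ fills out the complement of the L-space region of $M$.

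Then I would prove the gluing lemma: if slopes $\gamma_T$ are chosen on all JSJ tori so that for every $i$ the induced tuple lies in $\mathcal{R}(M_i)$ and so that across each JSJ torus the two pieces present compatible boundary data (both transverse with matching suspensions, or tangential/transverse types paired off correctly, with co-orientations agreeing), then the piecewise foliations assemble to a $C^0$ co-oriented foliation of $Y$, tautness following from standard results on gluing foliations along incompressible tori. What remains is the combinatorial heart. Here I would use the criterion for a graph manifold to be an L-space in terms of its JSJ pieces: $Y$ fails to be an L-space exactly when one can pick a slope $\gamma_T$ on each JSJ torus so that every $M_i$, filled along the induced slopes, is not an L-space. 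Combined with the previous paragraph this puts each induced tuple in $\mathcal{R}(M_i)$, after which one must run a consistency argument over the tree $\Gamma$ --- simultaneously adjusting the slopes $\gamma_T$ within the complement of the L-space regions, fixing the real horizontal parameters, and choosing the transverse/tangential types --- so that all the gluing compatibility conditions hold at once.

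I expect this last step, together with the exceptional pieces, to be the main obstacle. A horizontal foliation couples all of a piece's boundary slopes through a single Euler-number inequality, so the slopes cannot be chosen independently piece by piece; the constraints must be propagated across $\Gamma$, with particular care at a piece admitting only vertical foliations, which pins the fibre slope on one of its tori and thereby constrains its neighbour. The genuinely delicate pieces are the small ones --- Seifert fibrations of the solid torus or of the disc with at most two exceptional fibres, and the twisted $I$-bundle over the Klein bottle --- where the Eisenbud--Hirsch--Neumann inequalities degenerate, where a piece may carry two inequivalent Seifert structures, and where the L-space region collapses to a point or a half-open interval; these demand a separate hands-on treatment, matched by a parallel case analysis on the L-space side.
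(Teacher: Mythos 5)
Your overall architecture -- cut along the JSJ tori, identify for each Seifert piece which boundary slopes are realized by (horizontal/vertical) co-oriented taut foliations, and show that the non-L-space hypothesis lets one choose compatible slopes on all the tori at once -- is the right shape, and the realization/gluing half of it is essentially what Boyer and Clay's machinery of detected slopes provides (their Theorems 8.1 and 1.7), which the paper simply quotes rather than rebuilding by hand. But the step you call the ``combinatorial heart'' is exactly where your proposal has a genuine gap, in two respects. First, the L-space criterion you invoke is misstated: it is not true that $Y=M_1\cup_h M_2$ fails to be an L-space exactly when some slope Dehn-fills both sides to non-L-spaces. The correct gluing criterion is phrased in terms of the \emph{strict} L-space slopes $\mathcal{L}^\circ_{M_i}$ (the interior of the L-space interval): $Y$ is a non-L-space if and only if some $\alpha\notin\mathcal{L}^\circ_{M_1}$ has $h(\alpha)\notin\mathcal{L}^\circ_{M_2}$. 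A slope on the boundary of the interval Dehn-fills to an L-space yet can still glue to a non-L-space, so your criterion, read with Dehn fillings, is false in one direction and in any case unproved; the matching of the foliation-realization set with the ``complement of the L-space region'' likewise has to be the complement of $\mathcal{L}^\circ_M$ (equivalently, NLS detection), not of $\mathcal{L}_M$.

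Second, even granting the two-piece gluing theorem, you still need a \emph{simultaneous} choice of slopes on all $n$ JSJ tori such that every piece sees a detected tuple, and your proposal leaves this to an unspecified ``consistency argument over the tree,'' which you yourself flag as the main obstacle. The paper closes precisely this gap: it replaces Dehn filling by $N$-filling (gluing the twisted $I$-bundle over the Klein bottle along the chosen slope), shows the Floer homology of such fillings is well defined, proves that a slope is detected by $N$ exactly when it lies outside $\mathcal{L}^\circ_M$, and then runs an induction on the number of tori in which the $N$-filled pieces are themselves closed non-L-spaces to which the two-piece theorem can be applied again. Without some such device your per-torus choices cannot be propagated across the tree, because the detection condition on a piece with several boundary components is not a condition on each boundary slope separately. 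Finally, your plan to re-derive the foliation gluing and the horizontal-parameter bookkeeping (Eisenbud--Hirsch--Neumann inequalities, exceptional small pieces) by hand is a substantial project in itself; it is exactly the content of Boyer--Clay's detection framework, and quoting it -- as the paper does -- is what makes the argument short. As written, then, the proposal identifies the right ingredients but does not supply the new step that makes the theorem go through.
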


An independent alternative proof of this result, together with an explicit classification of graph manifolds admitting cooriented taut foliations, appears in \cite{R2015}, by the third author.

There is a third condition on three-manifolds that is relevant in this setting. Recall that a countable group is left-orderable if it admits an effective action on $\bR$ by order-preserving homeomorphisms \cite{BRW2005}. There is a conjectured equivalence among prime three-manifolds between L-spaces and non-left-orderability of the fundamental group \cite{BGW2013}. Theorem \ref{Main Theorem} gives rise to an equivalence between all three conditions for graph manifolds.

\begin{theorem}\label{Equivalence Theorem}If $Y$ is a closed, connected, orientable graph manifold then the following are equivalent:
\begin{itemize}
\item[(i)] $Y$ is not an L-space;
\item[(ii)] $Y$ admits a $C^0$ coorientable taut foliation;
\item[(iii)] $Y$ has left-orderable fundamental group.
\end{itemize}
\end{theorem}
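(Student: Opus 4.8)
The plan is to derive Theorem~\ref{Equivalence Theorem} by combining Theorem~\ref{Main Theorem} with two results already in the literature, so that the only new input is the implication (i)$\Rightarrow$(ii). First I would observe that (ii)$\Rightarrow$(i) is immediate from the \os\ obstruction discussed in the introduction: a $C^0$ coorientable taut foliation on a rational homology sphere can be promoted to a $C^2$ one by Kronheimer--Mrowka-type smoothing arguments (or one may invoke the version of the \os\ theorem valid for $C^0$ foliations), hence forces $\dim\HFhat(Y) > |H_1(Y;\bZ)|$, so $Y$ is not an L-space. Combined with Theorem~\ref{Main Theorem}, this already gives (i)$\Leftrightarrow$(ii).

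Next I would bring in the work of Boyer and Clay on graph manifolds, which shows that a graph manifold admitting a coorientable taut foliation has left-orderable fundamental group, giving (ii)$\Rightarrow$(iii); and conversely that left-orderability of $\pi_1$ for a graph manifold implies the existence of a coorientable taut foliation, giving (iii)$\Rightarrow$(ii). (Depending on exactly what Boyer--Clay prove, one of these directions may instead be packaged as (iii)$\Rightarrow$(i) via the conjectured equivalence being known for graph manifolds from one side, together with the genuinely new content of Theorem~\ref{Main Theorem} closing the loop.) In any case, assembling the cycle (i)$\Rightarrow$(ii)$\Rightarrow$(iii)$\Rightarrow$(i)---with the first arrow being Theorem~\ref{Main Theorem}, the second Boyer--Clay, and the third a combination of the $C^0$ \os\ obstruction with Theorem~\ref{Main Theorem} or the relevant Boyer--Clay statement---yields the three-way equivalence.

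The main obstacle is not in the bookkeeping of implications but entirely contained in Theorem~\ref{Main Theorem}, whose proof is the substance of the paper; once that is in hand, Theorem~\ref{Equivalence Theorem} is a short deduction. The one subtlety worth flagging is the regularity issue: the \os\ obstruction as originally stated requires a $C^2$ foliation, whereas Theorem~\ref{Main Theorem} produces only a $C^0$ one, so I would either cite the smoothing result that upgrades $C^0$ taut foliations on closed oriented $3$-manifolds to smooth ones (away from the L-space condition this is unobstructed) or cite the extension of the Heegaard Floer obstruction to the $C^0$ setting; this is a standard point but should be stated explicitly so that the logical cycle genuinely closes.
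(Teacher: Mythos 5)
Your assembly of the equivalence is the same as the paper's: (i)~$\Rightarrow$~(ii) is Theorem~\ref{Main Theorem}, (ii)~$\Leftrightarrow$~(iii) is Boyer--Clay, and (ii)~$\Rightarrow$~(i) comes from the Heegaard Floer obstruction, so the logical bookkeeping is fine. The one genuine problem is the mechanism you propose for (ii)~$\Rightarrow$~(i). There is no ``Kronheimer--Mrowka-type smoothing argument'' that upgrades a $C^0$ coorientable taut foliation to a $C^2$ one, and no such smoothing theorem is available in the literature; this regularity gap is precisely why the $C^0$ case required new work. Kronheimer--Mrowka and Ozsv\'ath--Szab\'o do not smooth foliations at all: they apply the Eliashberg--Thurston perturbation, which needs $C^2$ regularity as input, to produce weakly (semi-)fillable contact structures and then run the gauge-theoretic or Floer-theoretic argument. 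So if you lean on the ``smoothing'' option in your last paragraph, that step fails.

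The correct repair is exactly your second option, and it is what the paper does: cite Bowden and, independently, Kazez--Roberts, who extend the Eliashberg--Thurston approximation directly to taut $C^0$ foliations, producing weakly semi-fillable contact structures; combined with Ozsv\'ath--Szab\'o's theorem this shows a manifold with a taut $C^0$ coorientable foliation is not an L-space. (Alternatively, for graph manifolds one can cite Boyer--Clay's independent proof of (ii)~$\Rightarrow$~(i), which is the other route the paper mentions.) With that substitution your argument coincides with the paper's proof; without it, the implication (ii)~$\Rightarrow$~(i) is not justified.
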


The equivalence (ii) $\Leftrightarrow$ (iii) is due to Boyer and Clay \cite{BC}. The implication (ii) \(\Rightarrow\)  (i) is established by Boyer and Clay in \cite{BC-prep}. Alternately, this implication follows from a theorem of Bowden \cite{Bowden} and, independently, Kazez and Roberts \cite{KR, KR-prep} that taut \(C^0\) foliations can be approximated by weakly semi-fillable contact structures, together with the earlier work of  Ozsv\'ath and Szab\'o \cite{OSz2004}. Theorem \ref{Main Theorem} provides the final required implication (i) $\Rightarrow$ (ii).
Among graph manifolds, the above equivalence was known for Seifert fibered spaces (see \cite{BGW2013} and references therein). The case of a graph manifold with a single JSJ torus was shown in \cite[Theorem 1.1]{HW} of the first and fourth authors; this case also follows from the second and third authors' gluing theorem \cite[Theorem 6.2]{RR}. In fact, Theorem \ref{Main Theorem} (and thus Theorem \ref{Equivalence Theorem}) also follows from results in \cite{RR}, as we aim to show in this paper.

The equivalence (i) $\Leftrightarrow$ (iii) resolves \cite[Conjecture 1]{BGW2013} in the affirmative for graph manifolds. We thank Tye Lidman for pointing out the following immediate consequence:

\begin{corollary}
Suppose $f\co Y_1\to Y_2$ is a non-zero degree map between closed, connected, orientable graph manifolds. If $Y_1$ is an L-space then $Y_2$ is an L-space as well. 
\end{corollary}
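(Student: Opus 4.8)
The plan is to prove the contrapositive: if $Y_2$ is not an L-space, then $Y_1$ is not an L-space. The point is that a non-zero degree map constrains fundamental groups tightly enough that left-orderability can be transported from $Y_2$ to $Y_1$, after which the equivalence (i)$\Leftrightarrow$(iii) of Theorem~\ref{Equivalence Theorem} finishes the job. Write $d=\deg f\neq 0$. Because $f_*\circ f^*$ is multiplication by $d$ on rational (co)homology, $f^*\co H^*(Y_2;\bQ)\to H^*(Y_1;\bQ)$ is injective, so $b_i(Y_1)\ge b_i(Y_2)$ for all $i$; if $Y_1$ is not a rational homology sphere we are done (an L-space is a rational homology sphere), so assume it is. If $\pi_1(Y_1)$ were finite then $Y_1$ would be a spherical space form, hence an L-space, and (using the finite-index statement below) $\pi_1(Y_2)$ would also be finite, making $Y_2$ a spherical space form and an L-space, contrary to hypothesis; so $\pi_1(Y_1)$ is infinite and $Y_1$ is a closed aspherical $3$-manifold. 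For the same reasons $Y_2$ is aspherical: it is prime with infinite fundamental group -- infinite since $\pi_1(Y_2)$ is left-orderable by Theorem~\ref{Equivalence Theorem}, hence torsion-free and nontrivial -- and it is not $S^1\times S^2$ since $b_1(Y_2)\le b_1(Y_1)=0$.

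Next I pass to a finite cover of $Y_2$. The image $G:=f_*\pi_1(Y_1)\le\pi_1(Y_2)$ has finite index: otherwise the corresponding cover $\widehat Y\to Y_2$ is a non-compact $3$-manifold, so $H_3(\widehat Y;\bZ)=0$, yet the lift of $f$ would send $[Y_1]$ to a class mapping onto $d[Y_2]\neq0$ in $H_3(Y_2;\bZ)$, a contradiction. Since $\pi_1(Y_2)$ is left-orderable (Theorem~\ref{Equivalence Theorem}), so is its subgroup $G$. Let $p\co Y_2'\to Y_2$ be the finite cover with $p_*\pi_1(Y_2')=G$. Finite covers of graph manifolds are graph manifolds, so $Y_2'$ is an aspherical graph manifold with left-orderable fundamental group, and hence is not an L-space by Theorem~\ref{Equivalence Theorem}. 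Moreover $f$ lifts to $g\co Y_1\to Y_2'$ with $g_*\co\pi_1(Y_1)\twoheadrightarrow\pi_1(Y_2')$ surjective and $\deg g=d/[\pi_1(Y_2):G]\neq0$. We have therefore reduced to showing that $\pi_1(Y_1)$ is left-orderable given a $\pi_1$-surjective, non-zero degree map $g$ from $Y_1$ onto the non-L-space graph manifold $Y_2'$.

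This last reduction is where the real work lies, and I expect it to be the main obstacle. The clean outcome would be $\ker g_*=1$: then $Y_1$ and $Y_2'$ are closed aspherical $3$-manifolds with isomorphic fundamental groups, so they are homotopy equivalent and -- by geometrization -- homeomorphic, whence $Y_1\cong Y_2'$ is not an L-space. Establishing $\ker g_*=1$ should go through the structure theory of non-zero degree maps between graph manifolds: after a homotopy $g$ can be made to respect the JSJ tori, restricting to non-zero degree maps of the Seifert pieces, and on each piece the regular fibre must map to a non-zero power of the target fibre (killing the fibre would factor $g$ through a $2$-complex and kill the degree), which together with the combinatorics of the JSJ graph should pin $g_*$ down to an isomorphism. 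If a nontrivial kernel cannot be excluded outright, the alternative is to analyse the extension $1\to\ker g_*\to\pi_1(Y_1)\to\pi_1(Y_2')\to1$ and show it is convex with left-orderable kernel, so that a left-ordering on $\pi_1(Y_2')$ lifts; either way one must understand how a graph manifold can be collapsed onto another by a non-zero degree map.
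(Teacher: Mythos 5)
Your overall strategy (prove the contrapositive via left-orderability and Theorem \ref{Equivalence Theorem}) is the right one, and your observation that $f_*$ has nontrivial, indeed finite-index, image is correct. But the proof is not complete, and the step you yourself flag as ``where the real work lies'' is a genuine gap, not a technicality. Your hoped-for clean outcome $\ker g_*=1$ is false in general: a non-zero degree map between aspherical graph manifolds need not be $\pi_1$-injective (there are degree-one maps between such manifolds that are not homotopy equivalences, e.g.\ maps that collapse part of the JSJ structure), so no amount of JSJ bookkeeping will pin $g_*$ down to an isomorphism. Your fallback via the extension $1\to\ker g_*\to\pi_1(Y_1)\to\pi_1(Y_2')\to 1$ is circular: an extension of a left-orderable group by a left-orderable kernel is indeed left-orderable, but the kernel here is a subgroup of $\pi_1(Y_1)$, the very group whose orderability you are trying to establish, and there is no independent reason for it to be left-orderable.

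The missing ingredient is the theorem of Boyer--Rolfsen--Wiest that the paper invokes: for a compact, connected, $P^2$-irreducible three-manifold $M$, the group $\pi_1(M)$ is left-orderable as soon as it admits a \emph{nontrivial} homomorphism to a left-orderable group (\cite[Theorem 1.1]{BRW2005}; see also \cite[Theorem 3.7]{BRW2005}). Graph manifolds are prime by the paper's convention, so this applies to $Y_1$. With it, the argument collapses to a few lines: if $Y_2$ is not an L-space, then $\pi_1(Y_2)$ is left-orderable by Theorem \ref{Equivalence Theorem}; the non-zero degree map gives a homomorphism $f_*\colon\pi_1(Y_1)\to\pi_1(Y_2)$ with nontrivial image (your finite-index argument, or \cite[Lemma 3.8]{BRW2005}); that image is left-orderable as a subgroup of a left-orderable group; hence $\pi_1(Y_1)$ is left-orderable, and Theorem \ref{Equivalence Theorem} shows $Y_1$ is not an L-space. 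No passage to finite covers, no surjectivity, and no structure theory of non-zero degree maps between graph manifolds is needed.
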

\begin{proof}This follows from Theorem \ref{Equivalence Theorem} and \cite[Theorem 3.7]{BRW2005}. Note that the existence of the non-zero degree map $f$ induces a non-trivial homomorphism from $\pi_1(Y_1)$ to $\pi_1(Y_2)$ \cite[Lemma 3.8]{BRW2005}. Hence if $\pi_1(Y_2)$ is left-orderable then so is $\pi_1(Y_1)$ \cite[Theorem 1.1]{BRW2005}.\end{proof}

Our work rests on a detailed study of the Heegaard Floer invariants of orientable three-manifolds $M$ with torus boundary. Denote by $M(\alpha)$ the result of Dehn filling along a slope $\alpha$ in $\partial M$, that is, $\alpha$ represents a primitive class in $H_1(\partial M;\bZ)/\{\pm 1\}$. The set of slopes $Sl(M)$ may be identified with the extended rationals $\bQ\cup\{\frac{1}{0}\}$, viewed as a subspace of $\bR \text{P}^1$. Consider the set of  L-space slopes $\sL_M = \{\alpha \, |\,  M(\alpha) \text{\ is\ an\ L-space} \}$; its interior $\sL_M^\circ$ is the set of {\em strict L-space slopes}. The key tool used in the proof of Theorem \ref{Main Theorem} is the following non-L-space cutting theorem, which follows from results proved in~\cite{RR}.

\begin{theorem}\label{Cutting Theorem}
Let $N$ denote the twisted $I$-bundle over the Klein bottle, with rational longitude~$\lambda$. Let $M_1$ and $M_2$ be compact, connected, orientable three-manifolds with torus boundary, and suppose that $Y\cong M_1\cup_h M_2$ for some homeomorphism $h\co \partial M_1\to \partial M_2$. If $Y$ is not an L-space, then
\begin{itemize}
\item[(1)]
there exists a slope $\alpha$ in $\partial M_1$ such that $\alpha \not\in \mathcal{L}^\circ_{M_1}$ and $h(\alpha) \not\in\mathcal{L}^\circ_{M_2}$; moreover, 
\item[(2)]
for any orientation-reversing homeomorphisms 
$\varphi_i : \partial N \to \partial M_i$ with
$\varphi_{1}(\lambda) = \alpha$ and
$\varphi_{2}(\lambda) = h(\alpha)$,
the closed manifolds
$N \cup_{\varphi_1} M_1$ and $N \cup_{\varphi_2} M_2$
are non-L-spaces.
\end{itemize}
\end{theorem}
We note that statement~(1) alternatively results from an enhanced gluing result introduced in Theorem~\ref{Gluing Theorem} below, which is of independent interest.

\section{Notions of simplicity} Before proving Theorem \ref{Cutting Theorem} we recall the main notions of \cite{HW} and \cite{RR} in order to highlight a key point of interaction between these two works. 
The subject of \cite{RR} is the class of  {\em Floer simple} manifolds: A manifold with torus boundary \(M\) is Floer simple 
if and only if \(\mathcal{L}_{M}\) contains more than one element \cite[Proposition 1.3]{RR}. 
In \cite{HW}, the main object of study is the class of  {\em simple loop-type} manifolds. The bordered Floer homology \cite{LOT} of these manifolds has a particularly nice form. Below, we briefly summarize some relevant facts about bordered Floer homology. For a more detailed exposition we refer to \cite[Section 2]{HW}. 

The bordered Floer module \(\CFD\)  is an invariant of a three-manifold with parametrized boundary. When \(\partial M\) is a torus we can specify a parametrization of \(\partial M\) by a pair of simple closed curves \(\alpha, \beta \in H_1(\partial M;\bZ)\) with \(\alpha \cdot \beta = 1\). In this case, the bordered Floer homology \(\CFD(M, \alpha, \beta)\) may be represented by a directed graph whose edges are labeled by elements of the set \(\mathcal{A} = \{\rho_1,\rho_2,\rho_3, \rho_{12}, \rho_{23}, \rho_{123}\}\).  The triple \((M,\alpha, \beta)\) is said to be of {\em  loop-type} if each vertex in the graph representing \(\CFD(M,\alpha, \beta)\) has valence \(2\) \cite[Definition 3.2]{HW}. Such a graph can be decomposed into certain standard {\em puzzle pieces} as described in \cite[Section~3]{HW}. For our  purposes, the  relevant pieces are the ones shown in Figure~\ref{Fig:Unstable Puzzle Pieces}. The property of being loop-type is inherent to the underlying manifold $M$: If the triple $(M, \alpha, \beta)$ is of loop-type for some choice of parametrizing curves $\alpha$ and $\beta$, then it is of loop-type for any choice of $\alpha$ and $\beta$. In this case, we say the manifold $M$ is of loop-type.

\begin{figure}[h]
\centering
\labellist
\pinlabel {\begin{tikzpicture}[>= latex, scale=0.7]
 \node at (1.5,0) {$\circ$};  \node at (3,0) {$\circ$}; \node at (4.5,0) {$\bullet$};
\draw[->, thick, shorten >=0.1cm,shorten <=0.3cm]  (0,0) to (1.5,0);
\draw[<-, thick, shorten >=0.1cm,shorten <=0.1cm, dashed]  (1.5,0) to (3,0);
\draw[<-, thick, shorten >=0.1cm,shorten <=0.1cm] (3,0) to (4.5,0);
\node at (0.75,0.2) {$\scriptstyle\rho_1$};\node at (2.25,0.2) {$\scriptstyle\rho_{23}$};\node at (3.75,0.2) {$\scriptstyle\rho_3$};
\end{tikzpicture}} at 79.5 32  
\pinlabel {$\underbrace{\phantom{aaaaaa}}_k$} at 76 18
\endlabellist
\includegraphics[scale=0.7]{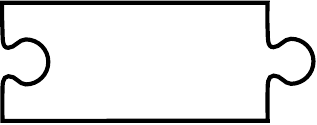}
\qquad
\labellist
\pinlabel {\begin{tikzpicture}[>= latex, scale=0.7]
\node at (1.5,0) {$\circ$};  \node at (3,0) {$\circ$};  \node at (4.5,0) {$\bullet$};
\draw[->, thick, shorten >=0.1cm,shorten <=0.3cm]  (0,0) to (1.5,0);
\draw[->, thick, shorten >=0.1cm,shorten <=0.1cm, dashed]  (1.5,0) to (3,0);
\draw[->, thick, shorten >=0.1cm,shorten <=0.1cm] (3,0) to (4.5,0);
\node at (0.85,0.2) {$\scriptstyle\rho_{123}$};\node at (2.2,0.2) {$\scriptstyle\rho_{23}$};\node at (3.7,0.2) {$\scriptstyle\rho_2$};
\end{tikzpicture}} at 79.5 32  
\pinlabel {$\underbrace{\phantom{aaaaaa}}_k$} at 76 18
\endlabellist
\includegraphics[scale=0.7]{in-out} 
\qquad
\labellist
\pinlabel {\begin{tikzpicture}[>= latex, scale=0.7]
 \node at (4.5,0) {$\bullet$};
\draw[->, thick, shorten >=0.1cm,shorten <=0.3cm]  (0,0) to (4.5,0);
\node at (2.25,0.2) {$\scriptstyle\rho_{12}$};
\end{tikzpicture}} at 79.5 32  
\endlabellist
\includegraphics[scale=0.7]{in-out}
\caption{ Puzzle pieces for simple loops; see \cite{HW}. The pieces represented by the letters $\bar c_k$, $d_k$ and $e$ are shown from left to right where $k$ is a positive integer determining the number of $\circ$ vertices. A simple loop may be expressed as a cyclic word in these letters. }\label{Fig:Unstable Puzzle Pieces}
\end{figure}

\begin{definition} \cite[Definition 4.19]{HW} A manifold with torus boundary $M$ is of simple loop-type if it is of loop-type, the number of connected components of the graph is equal to the number of \(spin^c\) structures on \(M\), and for some choice of parametrizing curves $\alpha$ and $\beta$, $\CFD(M, \alpha, \beta)$ is expressible in the letters $\bar{c}_k$, $d_k$, and $e$.
\end{definition}

\begin{proposition}\label{Simplicity Proposition} \(M\) is Floer simple if and only if \(M\) is of simple loop-type. 
\end{proposition}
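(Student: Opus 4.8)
The plan is to establish the two implications separately, using the structural results of \cite{HW} and \cite{RR}. For the easier direction, suppose $M$ is of simple loop-type, so that for some parametrization $\CFD(M,\alpha,\beta)$ decomposes as a direct sum of simple loops --- cyclic words in the letters $\bar c_k$, $d_k$, and $e$, one summand per $spin^c$ structure. I would pair this module with the type-$A$ module of a solid torus along a family of slopes via the box tensor product and read off $\dim\HFhat(M(\cdot))$; the point is that $\bar c_k$, $d_k$, and $e$ are exactly the ``economical'' puzzle pieces, for which the pairing produces no superfluous generators over an entire interval of slopes. Hence $\mathcal{L}_M$ is a nondegenerate interval, so in particular $|\mathcal{L}_M|>1$, and $M$ is Floer simple by \cite[Proposition 1.3]{RR}. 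In fact \cite{HW} already carries out the computation of $\mathcal{L}_M$ for simple loop-type $M$, so this implication amounts to that computation together with the definition of Floer simplicity.

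For the converse, suppose $M$ is Floer simple, i.e.\ $|\mathcal{L}_M|>1$. The first step is to show that $M$ is of loop-type: for this I would appeal to the description of Floer simple manifolds in \cite{RR}, which identifies $\CFD$ of such a manifold (for a suitable framing) with an explicit normal form, and observe that this normal form has all vertices of valence $2$ with one component per $spin^c$ structure. The second step is to show that every loop occurring in $\CFD(M)$ is a simple loop. I would argue by contradiction: if some loop involved a letter other than $\bar c_k$, $d_k$, $e$, then by the explicit relationship established in \cite{HW} between the letters of a loop and the L-space slopes of the corresponding fillings, $\dim\HFhat(M(\alpha))$ would exceed $|H_1(M(\alpha))|$ for all but at most one slope $\alpha$, forcing $|\mathcal{L}_M|\le 1$ --- a contradiction. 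Thus $M$ is of loop-type and expressible in the letters $\bar c_k$, $d_k$, $e$, hence of simple loop-type.

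The main obstacle is the converse, and within it the translation between the two frameworks: extracting from \cite{RR} that Floer simplicity already forces loop-type, and then converting the quantitative hypothesis ``at least two L-space fillings'' into the combinatorial restriction on which puzzle-piece letters may appear. A minor point of bookkeeping is that simple loop-type only requires the good form for \emph{some} choice of parametrizing curves; since both loop-type and simple loop-type are framing-independent, it suffices to produce one good choice, e.g.\ the framing adapted to the L-space interval in \cite{RR}, and then no further change of basis is needed.
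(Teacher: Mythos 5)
Your ``easy'' direction coincides with the paper's: for a simple loop-type $M$ one strict L-space slope is produced from the combinatorics of the letters $\bar c_k$, $d_k$, $e$ (the paper cites \cite[Proposition 4.18]{HW}), whence $|\sL_M|>1$ and Floer simplicity follows from \cite[Proposition 1.3]{RR}. The gap is in your converse. By definition, loop-type requires \emph{two} things: every vertex of the graph representing $\CFD(M,\alpha,\beta)$ has valence $2$, \emph{and} the number of connected components equals the number of $spin^c$ structures on $M$. You dispose of the second requirement with the word ``observe,'' but this is precisely the point that needs an argument: the normal form of \cite[Proposition 3.9]{RR} describes a graph built out of pieces of type $\bar c_k$, and it is not apparent from that description how many loops the graph has. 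The paper proves the count by labelling each vertex $v$ with its relative $spin^c$ structure $\mathfrak{s}(v)\in H^2(M,\partial M;\bZ)\cong H_1(M;\bZ)$, using \cite[Lemma 3.8]{RR} to record that $\rho_1$-, $\rho_{23}$- and $\rho_3$-edges shift this label by $0$, $\alpha$ and $\alpha+\beta$ respectively, deducing a bijection between puzzle pieces and $H_1(M;\bZ)/\langle\alpha\rangle$ in which consecutive pieces of a loop differ by $\beta$, and concluding that the loops are in bijection with $H_1(M;\bZ)/\langle\alpha,\beta\rangle$, whose order is $|spin^c(M)|$. Without some such counting mechanism your first step does not establish loop-type, so the converse is incomplete.

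Two secondary points. First, once you invoke \cite[Proposition 3.9]{RR} you get more than a valence-$2$ normal form: the pieces produced there are already of type $\bar c_k$, so your second step (the contradiction over letters) is redundant. Second, as stated that contradiction argument leans on the claim that a single non-simple letter forces $\dim\HFhat(M(\alpha))>|H_1(M(\alpha))|$ for all but at most one slope; this is not literally what \cite{HW} proves (their statements are phrased in terms of simplicity of the loops with respect to a given slope and strict L-space slopes), so if you kept this route you would need to extract and justify that quantitative statement rather than cite it as known.
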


\begin{proof} The bordered Floer homology of a Floer simple manifold \(M\) was explicitly computed in \cite[Proposition 3.9]{RR} for an appropriate choice of parametrization \((\alpha, \beta)\). In the course of the proof, it is shown that \(\CFD(M,\alpha,\beta)\) is composed of puzzle pieces of type \(\bar c_k\). Thus to see that \(M\) is of simple loop type, we need only check that the number of loops is equal to the number of \(spin^c\) structures on \(M\), which is \(|H^2(M;\bZ)| = |H_1(M,\partial M;\bZ)| = |H_1(M;\bZ)/\langle \alpha, \beta \rangle| \). 

 Each vertex \(v\) of \(\CFD(M, \alpha, \beta)\) is labeled by a relative \(spin^c\) structure \(\mathfrak{s}(v)\), which we can view as an element of \(H^2(M, \partial M;\bZ) \cong H_1(M;\bZ)\). By \cite[Lemma 3.8]{RR}, edges of the graph labeled by \(\rho_1\) preserve this labeling, edges labeled by \(\rho_{23}\) shift the labeling by \(\alpha \) and edges labeled by \(\rho_{3}\) shift the labeling by \(\alpha + \beta\). 

Given a puzzle piece in \(\CFD(M, \alpha, \beta)\), let \(v\) be its unique black vertex, and label the piece by the image of \(\mathfrak{s}(v)\) in \(H_1(M;\bZ)/\langle \alpha \rangle\). This labeling defines a bijection between the set of puzzle pieces in \(\CFD(M, \alpha, \beta)\) and  \(H_1(M;\bZ)/\langle \alpha \rangle\). Moreover, if the label on a given piece is \(a\), the label on the next piece in the loop is \(a + \beta\). It follows that the set of loops is in bijection with \((H_1(M;\bZ)/\langle \alpha \rangle)/\langle \beta \rangle  \cong H_1(M;\bZ)/\langle \alpha, \beta \rangle\).
Thus a Floer simple manifold is simple loop-type.

Conversely, given a simple loop-type manifold $M$, fix parametrizing curves $\alpha$ and $\beta$ such that $\CFD(M, \alpha, \beta)$ consists only of segments of type $\bar{c}_k$, $d_k$, and $e$. The slope $\infty$ is a strict L-space slope for $(M, \alpha, \beta)$ by \cite[Proposition 4.18]{HW}; that is $\alpha \in \sL^\circ_M$. This implies that $| \sL_M | > 1$ and therefore, by \cite[Proposition 1.3]{RR}, that $M$ is Floer simple.
\end{proof}

\

\section{Cutting and Gluing}

Combining Proposition \ref{Simplicity Proposition} with the gluing theorem \cite[Theorem 1.3]{HW} for simple loop-type manifolds, we obtain a gluing result for Floer simple manifolds.
\begin{theorem}\label{Gluing Theorem}
Suppose that $M_1$ and $M_2$ are  Floer simple manifolds, and consider the closed manifold $M_1\cup_h M_2$  for some homeomorphism 
$h\co \partial M_1\to\partial M_2$. 
\begin{itemize}
\item[(1)] If neither $M_1$ nor $M_2$ are solid torus-like, then $M_1\cup_h M_2$ is not an L-space if and only if there is a slope $\alpha$ in $\partial M_1$ such that  $\alpha \not\in \mathcal{L}^\circ_{M_1}$ and $h(\alpha) \not\in\mathcal{L}^\circ_{M_2}$.
\item[(2)] If either $M_1$ or $M_2$ is solid-torus like, then $M_1\cup_h M_2$ is not an L-space if and only if there is a slope $\alpha$ in $\partial M_1$ such that  $\alpha \not\in \mathcal{L}_{M_1}$ and $h(\alpha) \not\in\mathcal{L}_{M_2}$. \end{itemize}
\end{theorem}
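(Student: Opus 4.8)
The plan is to read this off from the loop-type gluing theorem \cite[Theorem 1.3]{HW}. By Proposition \ref{Simplicity Proposition}, the hypothesis that $M_1$ and $M_2$ are Floer simple is exactly the hypothesis that each is of simple loop-type, and in particular of loop-type; thus \cite[Theorem 1.3]{HW} applies to the pair $(M_1,M_2)$ and decides, from the loop decompositions of $\CFD(M_i,\alpha_i,\beta_i)$, whether $M_1\cup_h M_2$ is an L-space. What remains is to re-express the conclusion of that theorem in terms of the L-space slopes $\mathcal{L}_{M_i}$ and $\mathcal{L}^\circ_{M_i}$, and to check that its hypotheses split into precisely the two cases (1) and (2).

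First I would recall, from \cite{RR}, the structure of $\mathcal{L}_M$ for a Floer simple manifold $M$: it is the set of rational slopes in a closed arc of $\bR\text{P}^1$ whose interior is $\mathcal{L}^\circ_M$, and the rational longitude $\lambda_M$ is never an L-space slope. Granting this, the condition ``there is a slope $\alpha$ with $\alpha\notin\mathcal{L}^\circ_{M_1}$ and $h(\alpha)\notin\mathcal{L}^\circ_{M_2}$'' --- and likewise its non-strict analogue --- becomes the combinatorial assertion that the open arcs $\mathcal{L}^\circ_{M_1}$ and $h^{-1}(\mathcal{L}^\circ_{M_2})$ do not cover $\bR\text{P}^1$, which is the shape of the criterion in \cite[Theorem 1.3]{HW}. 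Matching the two comes down to checking that the L-space interval read off in \cite{HW} from the loops of $\CFD(M,\alpha,\beta)$ coincides, under the identification of slopes with $\bQ\cup\{\frac{1}{0}\}$, with $\mathcal{L}_M$ as defined here via Dehn filling; this is a comparison of definitions.

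The one genuinely delicate point, and the source of the dichotomy between (1) and (2), is the behaviour at the endpoints of the L-space arc. If neither $M_i$ is solid torus-like, then whenever a slope lies on the boundary of $\mathcal{L}_{M_1}$ and is carried by $h$ to the boundary of $\mathcal{L}_{M_2}$ the resulting closed manifold is a non-L-space; so the sets entering the covering criterion must be the open arcs $\mathcal{L}^\circ_{M_i}$, which is exactly the information recorded by the unstable piece of $\CFD$ in \cite[Theorem 1.3]{HW}. If, on the other hand, $M_1$ (say) is solid torus-like, then $\mathcal{L}_{M_1}$ consists of every slope except $\lambda_{M_1}$, so its closure is all of $\bR\text{P}^1$ and $\mathcal{L}^\circ_{M_1}$ contains every slope; the strict condition then becomes vacuous and must be replaced by the non-strict one, giving (2), in accordance with the fact that for such $M_1$ the L-space question for $M_1\cup_h M_2$ reduces to whether $M_2\big(h(\lambda_{M_1})\big)$ is an L-space. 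I expect the main obstacle to be precisely this endpoint bookkeeping --- confirming that ``solid torus-like'' as used in the statement agrees with the loop-type notion of \cite{HW}, and that the strict/non-strict alternative in \cite[Theorem 1.3]{HW} corresponds to the alternative between $\mathcal{L}^\circ$ and $\mathcal{L}$ here. With that identification in hand, (1) and (2) follow by unwinding \cite[Theorem 1.3]{HW}.
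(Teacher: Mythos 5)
Your proposal matches the paper's proof, which is exactly the one-line observation that Proposition \ref{Simplicity Proposition} identifies Floer simple manifolds with simple loop-type manifolds, so that \cite[Theorem 1.3]{HW} applies verbatim and yields both cases of the statement. The additional bookkeeping you describe (matching $\mathcal{L}_M$, $\mathcal{L}^\circ_M$ and the solid torus-like dichotomy with the formulation in \cite{HW}) is precisely the translation implicit in the paper's citation, so the approach is essentially the same.
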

The two cases arising in this statement are expected: the second accounts for Dehn filling (that is, when one of the $M_i$ is a solid torus) and simply verifies the definition of an L-space slope. More generally, we must appeal to a larger class of manifolds which are called \emph{solid torus-like} \cite[Definition 3.23]{HW}, as they are characterized by having bordered Floer homology which resembles that of a solid torus in every $spin^c$ structure \cite{HW}, or equivalently, by having empty $\mathcal{D}^{\tau}$ in the sense of \cite{RR}.   It was proved in \cite{Gillespie} that a solid torus-like manifold must be a solid torus connected sum with an L-space. In particular, if we assume that $M_1$ and $M_2$ are boundary incompressible, then the conclusion in case $(1)$ holds.

{\bf The proof of Theorem \ref{Cutting Theorem}.} According to 
\cite[Theorem 1.6]{RR}, the set $\sL_{M_i}^\circ$ (for $i=1,2$) is either empty or it is (the restriction  to $\bQ\cup\{\frac{1}{0}\}$ of) a connected interval with rational endpoints. In the case $\mathcal{L}^\circ_{M_2} = \emptyset$, let $\alpha$ be the rational longitude of $M_1$, which is not an L-space slope. Similarly, if $\mathcal{L}^\circ_{M_1} = \emptyset$ we choose $\alpha$ such that $h(\alpha)$ is the rational longitude of $M_2$. If $\mathcal{L}^\circ_{M_1}$ and $\mathcal{L}^\circ_{M_2}$ are both nonempty, then $M_1$ and $M_2$ are both Floer simple \cite[Proposition 1.3]{RR}; since $Y$ is not an L-space it follows from Theorem \ref{Gluing Theorem} that there is a slope $\alpha\not\in \mathcal{L}^\circ_{M_1} \subset \mathcal{L}_{M_1}$ such that $h(\alpha) \not\in\mathcal{L}^\circ_{M_2} \subset\mathcal{L}_{M_2}$ as required.

Part (2) of Theorem~\ref{Cutting Theorem} is subsumed as a special case of \cite[Proposition 7.9]{RR} by the second and third authors, but the result still merits some explanation.
Again, and henceforth in this paper, $N$ denotes the twisted $I$-bundle over the Klein bottle, with rational longitude~$\lambda$.
It is straightforward to compute, e.g. from \cite[Theorem 5.1]{RR}, that $\mathcal{L}_N = \mathcal{L}_N^{\circ} = Sl(N) \setminus \{\lambda\}$.  Thus, for $M$ Floer simple,
Theorem \ref{Gluing Theorem} implies that
for any gluing map $\varphi \co \partial N \to \partial M$,
\begin{equation*}
N \cup_{\varphi}\mkern-2.5mu M \,\text{ is a non-L-space}
\;\iff\;
\varphi(\lambda) \notin \mathcal{L}_{M}^{\circ}.
\end{equation*}
Note that, similar to a Dehn filling, the above non-L-space criterion for $N \cup_{\varphi}\mkern-2.5mu M$ depends only on the slope $\varphi(\lambda) \in Sl(M)$, and is independent of the choice of framing, relative to~$\lambda$, of $\varphi$.  In fact, even the $\mathbb{Z}/2\mathbb{Z}$-graded groups $\widehat{HF}(N \cup_{\varphi}\mkern-2.5mu M)$ are independent of this choice of framing \cite[Proposition~21]{BGW2013}, although we will not need this stronger statement.  We therefore call any $N \cup_{\varphi}\mkern-2.5mu M$ with $\alpha = \varphi(\lambda)$ an $N${\textit{-filling}} of $M$ along the slope~$\alpha$.
The above non-L-space criterion---that an $N$-filling is a non-L-space if and only the filling is along a non strict-L-space slope---also holds for $N$-fillings of an arbitrary rational homology solid torus $M$, but the argument is more subtle in the non-Floer-simple case.
\hfill$\Box$

\section{The main theorem}
 
{\bf Decomposing along tori.} 
Since Theorem \ref{Cutting Theorem} generically produces lower-complexity closed non-L-spaces from a closed non-L-space, it provides an {\em{iterative}} decomposition tool for the proof of Theorem~\ref{Main Theorem}.  Moreover, at each decomposition step, the new $N$-filling slopes record one non strict-L-space slope for each boundary component, with these slopes pairwise identified under the original gluing maps.  
In particular, by iteratively decomposing a non-L-space graph manifold along a suitable JSJ decomposition, we can produce a collection of non-L-space $N$-filled Seifert fibered spaces, 
with $N$-filling slopes specifying gluing-compatible non strict-L-space slopes on all boundary components of the Seifert fibered spaces.
Appealing to \cite[Proposition 7.9]{RR} translates these data into the language of NLS-detected slopes in the sense of Boyer and Clay \cite[Definition 7.16]{BC}, whose machinery then automatically produces a cooriented taut foliaton on the original graph manifold.

To describe this process in more detail, we first need to set up some notation for cutting along tori. Note that we may assume without loss of generality that $b_1(Y)=0$: in the case that $b_1(Y)>0$ work of Gabai guarantees the existence of a coorientable taut foliation \cite{Gabai1983}. Thus, every torus $T\into Y$ separates $Y$ into two rational homology solid tori.

Given $Y$ and a collection of disjoint embedded tori $T_1, \ldots, T_n$ in $Y$, let $Y \cutting \{T_i\}$ denote the result of cutting $Y$ along each torus $T_i$. Since every torus is separating, this process produces $n+1$ pieces; that is, $Y \cutting \{T_i\} \cong M_1 \amalg \ldots \amalg M_{n+1}$, where each $M_j$ is a three-manifold with $\partial M_j$ a disjoint union of tori.

If we further specify a collection of slopes $\alpha_* = (\alpha_1, \ldots, \alpha_n)$ on each of the tori in $\{T_i\}$, we can extend each $M_j$ (for $1\le j \le n+1$) to a closed manifold $Y_j^{\alpha_*}$ in the following way: The collection of slopes $\alpha_*$ induces a collection of slopes on the boundary tori of each $M_j$, since each boundary component of each $M_j$ is identified with one of the $T_i$. A closed manifold $Y_j^{\alpha_*}$ is obtained from $M_j$ by gluing a copy of \(N\) (the twisted $I$-bundle over the Klein bottle) to each boundary of $M_j$ such that, for each gluing, the slope $\lambda$ in $\partial N$ is identified with the slope in the relevant component of $\partial M_j$ induced by $\alpha_*$. We say that $Y_j^{\alpha_*}$ is an \emph{N-filling} of $M_j$ along the slopes induced by $\alpha_*$.

Note that the manifold $Y_j^{\alpha_*}$ described above is not uniquely determined by $M_j$ and $\alpha_*$ since each time a copy of $N$ is glued to $M_j$ there is an infinite family of gluing maps which take $\lambda$ in $\partial N$ to the desired slope in $\partial M_j$. A particular gluing map is specified by choosing slopes dual to $\lambda$ in $\partial M$ and dual to each slope induced by $\alpha_*$ in $\partial M_j$; the manifold $Y_j^{\alpha_*}$ depends on the particular choice of dual slopes.  However, Theorem~\ref{Cutting Theorem} again tells us that the question of whether $Y_j^{\alpha_*}$ is an L-space is determined solely by $\alpha_*$, thus is independent of these choices.  Incidentally, one again has the stronger result that the $\mathbb{Z}/2\mathbb{Z}$-graded Heegaard Floer groups $\widehat{HF}(Y_j^{\alpha_*})$ are independent of these choices \cite[Proposition~21]{BGW2013}.

{\bf Slope detection.} Given a three-manifold $Y$, a collection of tori $\{T_i\}_{i=1}^n$ and a collection of slopes $\alpha_*$, we have explained how to construct manifolds $\{Y_j^{\alpha_*}\}_{j=1}^{n+1}$, and observed that the $\mathbb{Z}/2\mathbb{Z}$-graded groups \(\HFhat(Y_j^{\alpha_*})\) do not depend on the choices made in the construction. In particular, whether or not each  $Y_j^{\alpha_*}$ is an L-space is a well--defined question. The key step in proving Theorem \ref{Main Theorem} is the following.

\begin{proposition}\label{main proposition}
Let $Y$ be an irreducible three-manifold and fix a collection of disjoint embedded tori $\{T_1, \ldots, T_n\}$ in $Y$ such that each torus is separating. If $Y$ is a non-L-space, then there is some collection of slopes $\alpha_*$ on these tori with the property that each of the manifolds $Y_j^{\alpha_*}$ defined above is a non-L-space.
\end{proposition}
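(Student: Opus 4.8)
The plan is to induct on the number $n$ of tori, using Theorem~\ref{Cutting Theorem} to deal with one torus at a time and then recursing on a smaller closed manifold obtained by an $N$-filling.

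First I would handle the base case $n=0$: there are no tori, $Y_1^{\alpha_*}=Y$, and this is a non-L-space by hypothesis. For the inductive step, suppose $n\ge 1$. Cutting $Y$ along $T_1,\dots,T_n$ gives pieces $M_1,\dots,M_{n+1}$, and I would encode the combinatorics by the graph with a vertex for each $M_j$ and an edge for each $T_i$. Because every $T_i$ separates, this graph is a tree, hence has a leaf vertex; after relabelling I may assume $M_{n+1}$ is a leaf, joined along $T_n$ to $M_n$. Let $A$ be the union of $M_1,\dots,M_n$ reglued along $T_1,\dots,T_{n-1}$. Then $A$ is a compact connected orientable three-manifold whose boundary is a single torus, naturally identified with $T_n$, and $Y=A\cup_{T_n}M_{n+1}$.

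Next I would apply Theorem~\ref{Cutting Theorem} to this splitting. Since $Y$ is not an L-space, the theorem provides a slope $\alpha_n$ on $T_n$ for which (by part~(2)) the $N$-filling of $A$ along $\alpha_n$ — call it $\hat A$ — and the $N$-filling of $M_{n+1}$ along $\alpha_n$ are both non-L-spaces; as in the proof of Theorem~\ref{Cutting Theorem} and by Proposition~\ref{TIBOKB filling is well defined}, these are genuine properties of $\alpha_n$, independent of the auxiliary gluing data. The manifold $\hat A$ is a closed non-L-space carrying the disjoint tori $T_1,\dots,T_{n-1}$, each of which still separates: the dual graph of its torus decomposition is the original tree with the leaf $M_{n+1}$ and its edge $T_n$ removed, so it is again a tree. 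Its pieces are $M_1,\dots,M_{n-1}$ together with $M_n' := M_n\cup N$, the copy of $N$ being attached along $\alpha_n$ to the $T_n$-boundary of $M_n$. The induction hypothesis applied to $\hat A$ then yields slopes $\alpha_1,\dots,\alpha_{n-1}$ on $T_1,\dots,T_{n-1}$ such that the $N$-filling of each of $M_1,\dots,M_{n-1},M_n'$ along the induced slopes is a non-L-space. With $\alpha_*=(\alpha_1,\dots,\alpha_n)$, the remaining bookkeeping is: for $k<n$ the piece $M_k$ meets only tori among $T_1,\dots,T_{n-1}$, so $Y_k^{\alpha_*}$ is exactly the $N$-filling of $M_k$ just produced; the piece $M_n'$ already carries $N$ along $\alpha_n$, so $N$-filling it along the remaining slopes gives precisely $Y_n^{\alpha_*}$; and $Y_{n+1}^{\alpha_*}$ is the $N$-filling of $M_{n+1}$ along $\alpha_n$. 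All of these are non-L-spaces, which closes the induction.

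Since Theorem~\ref{Cutting Theorem} is doing the substantive work, I expect the difficulty here to be organizational. The key idea is to absorb the $N$-filling on the peeled-off torus into a new closed manifold $\hat A$ and recurse, rather than trying to choose all slopes simultaneously; the step requiring care is checking that the induction hypothesis really does apply to $\hat A$ — that the remaining tori still separate it (clear from the tree picture) and, more importantly, that $\hat A$ is still a non-L-space, which is exactly why part~(2) of the Cutting Theorem (and not only part~(1)) is needed. The one non-obvious identification is that the piece $M_n'$ of $\hat A$ arrives with a copy of $N$ already glued in, so that its $N$-filling reassembles as $Y_n^{\alpha_*}$ for the combined slope vector. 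The irreducibility hypothesis plays no real role in this induction — it is carried for the application to Theorem~\ref{Main Theorem} — although one may note that $\hat A$ remains irreducible whenever the $T_i$ are incompressible, since $N$ has incompressible boundary.
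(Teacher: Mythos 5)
Your argument is correct and is essentially the paper's: an induction on the number of tori in which Theorem~\ref{Cutting Theorem} supplies the slope on one torus and Proposition~\ref{TIBOKB filling is well defined} makes the $N$-fillings well defined, with only an organizational difference — you peel off a leaf of the (tree-shaped) dual graph and recurse on the single $N$-filled closed manifold $\hat A$, whereas the paper cuts along one torus and applies the inductive hypothesis to both resulting $N$-filled halves. Your side remark that irreducibility is not actually used in the induction is consistent with the paper, whose own inductive step likewise applies the hypothesis to the $N$-filled manifolds without verifying irreducibility.
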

\begin{proof}
First observe that if $n=1$ (that is, the collection of tori consists of just one torus), this is equivalent to Theorem~\ref{Cutting Theorem}.  In this case, $Y \cong M_1 \cup_h M_2$ for some gluing map $h$. By Theorem~\ref{Cutting Theorem}, there is a slope $\alpha$ in $\partial M_1$ such that $N$-filling $M_1$ along $\alpha$ gives a non-L-space and $N$-filling $M_2$ along $h(\alpha)$ gives a non-L-space. Let $\alpha_1$ be the slope in $T_1$ that corresponds to the slopes $\alpha \in \partial M_1$ and $h(\alpha) \in \partial M_2$.  Then $\alpha_* = (\alpha_1)$ gives the desired collection of slopes.

For the general case, we proceed by induction on $n$. Assume $n>1$ and the result holds for collections of fewer than $n$ tori. First cut $Y$ along the torus $T_1$ to produce two manifolds $M_1$ and $M_2$. By the $n=1$ case, there is a slope $\alpha_1$ in $T_1$ such that $N$-filling $M_1$ and $M_2$ along the slopes corresponding to $\alpha_1$ produces non-L-spaces. We denote the resulting closed manifolds by $Y_1^{(\alpha_1)}$ and $Y_2^{(\alpha_1)}$.

Having cut along $T_1$, the remaining collection of tori $\{T_2, \ldots, T_n\}$ splits into two subsets depending on whether each torus is contained in $M_1$ or $M_2$. Up to relabeling the tori, we may assume that $\{T_2, \ldots, T_m\}$ is the subset of tori contained in in $M_1$ and $\{T_{m+1}, \ldots, T_n\}$ is the subset of tori contained in $M_2$, for some $1\le m\le n$ (note that if $m=1$ the first subset is empty, and if $m=n$ the second subset is empty). We consider these subsets as collections of tori on $Y_1^{(\alpha_1)}$ and $Y_2^{(\alpha_1)}$, respectively. Note that each collection has at most $n-1$ tori.

By the inductive hypothesis applied to $Y_1^{(\alpha_1)}$ with the collection of tori $\{T_2, \ldots, T_m\}$, there is a collection of slopes $(\alpha_2, \ldots, \alpha_m)$ such that cutting along each torus and $N$-filling along the corresponding slopes produces only non-L-spaces. Similarly, there is collection of slopes $(\alpha_{m+1}, \ldots, \alpha_n)$ on the tori $\{T_{m+1}, \ldots, T_n\}$ in $Y_2^{(\alpha_1)}$ with this property. Finally, observe that the non-L-space manifolds obtained from $Y_1^{(\alpha_1)}$ by this process together with the non-L-space manifolds obtained from $Y_2^{(\alpha_1)}$ are exactly the same as the manifolds obtained by cutting $Y$ along the tori $\{T_1, \ldots, T_n\}$ and $N$-filling along the slopes induced by $\alpha_* := (\alpha_1, \ldots, \alpha_n)$.
\end{proof}

The proposition above can be restated using the notion of non-L-space (NLS) detected slopes defined in \cite{BC}. Let $M$ be a manifold with $\partial M$ a disjoint union of $n$ tori, and let $\alpha_* = (\alpha_1, \ldots, \alpha_n)$ be a collection of slopes on the boundary tori. Following \cite[Definition 7.2]{BC}, let $\mathcal{M}_t(\emptyset;[\alpha_*])$ denote the collection of manifolds obtained by filling each boundary component of $M$ by a copy of $N_t$ where the rational longitude of the $i^{\text{th}}$ copy of $N_t$ is sent to $\alpha_i$. The manifold $N_t$ is the Seifert fibered space over the disk with two cone points of order $t$ and Seifert invariants $(\frac{1}{t},\frac{t-1}{t})$. Note that $N_2=N$. In particular, in the notation introduced above, the set $\mathcal{M}_2(\emptyset;[\alpha_*])$ is the set of all possible $N$-fillings $Y^{\alpha_*}$ of $M$ along the slopes $\alpha_*$. Recall that all manifolds in this set have the same Heegaard Floer homology. According to \cite[Definition 7.16]{BC}, the collection of slopes $\alpha_*=(\alpha_1,\alpha_2,\ldots,\alpha_n)$ is non-L-space detected (or NLS detected) if for every $t>1$, the set $\mathcal{M}_t(\emptyset;[\alpha_*])$ contains no L-spaces.
We can now restate Proposition \ref{main proposition} as follows:

\begin{proposition}\label{prop:collection of NLS slopes}
Let $Y$ be an irreducible three-manifold with $b_1 = 0$ and fix a collection of disjoint tori $\{T_1, \ldots, T_n\}$ in $Y$. If $Y$ is a non-L-space, then there is some collection of slopes $\alpha_*$ on these tori with the property that the restriction of $\alpha_*$ to each $M_j$ in $Y \cutting \{T_i\}$ is NLS detected.
\end{proposition}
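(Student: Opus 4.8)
The plan is to reduce this statement to Proposition~\ref{main proposition} by unwinding the Boyer--Clay definition of NLS detection. First I would check that the hypotheses match up: since $b_1(Y)=0$ and $Y$ is irreducible, every embedded torus in $Y$ is separating, so $Y\cutting\{T_i\}$ is a disjoint union $M_1\amalg\cdots\amalg M_{n+1}$ and the collection $\{T_i\}$ satisfies the hypotheses of Proposition~\ref{main proposition}. Applying that proposition produces a collection of slopes $\alpha_*$ with the property that each of the $N$-fillings $Y_j^{\alpha_*}$ is a non-L-space.

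Next I would translate this conclusion into NLS detection. Fix $j$ and write $\beta_*$ for the collection of slopes on $\partial M_j$ induced by $\alpha_*$. Taking $t=2$ in \cite[Definition 7.16]{BC}, and using $N_2=N$, the set $\mathcal{M}_2(\emptyset;[\beta_*])$ is by definition exactly the collection of closed manifolds obtained from $M_j$ by gluing a copy of $N$ to each boundary component so that the rational longitude $\lambda$ is sent to the relevant component of $\beta_*$; that is, $\mathcal{M}_2(\emptyset;[\beta_*])$ is precisely the family of $N$-fillings $Y_j^{\alpha_*}$ of $M_j$ as the choices of dual slopes in the gluing maps vary. By Proposition~\ref{TIBOKB filling is well defined}, all manifolds in this family have the same $\HFhat$; in particular they are either all L-spaces or all non-L-spaces. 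Since Proposition~\ref{main proposition} guarantees that at least one of them is a non-L-space, no manifold in $\mathcal{M}_2(\emptyset;[\beta_*])$ is an L-space. Hence $t=2>1$ witnesses that the restriction of $\alpha_*$ to $M_j$ is NLS detected, and this holds for every $j$.

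The only point requiring any care is the identification of $\mathcal{M}_2(\emptyset;[\beta_*])$ with the family of $N$-fillings $Y_j^{\alpha_*}$, together with the appeal to Proposition~\ref{TIBOKB filling is well defined}: without knowing that being an L-space is constant across this family, the existence of a single non-L-space $N$-filling would not by itself certify the Boyer--Clay condition, which quantifies over \emph{all} manifolds in $\mathcal{M}_t(\emptyset;[\beta_*])$. Everything else is a direct unwinding of definitions, so I expect no further obstacles.
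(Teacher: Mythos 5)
Your proposal is correct and follows essentially the same route as the paper: apply Proposition~\ref{main proposition} to get the slopes $\alpha_*$, identify the $N$-fillings $Y_j^{\alpha_*}$ with elements of $\mathcal{M}_2(\emptyset;[\alpha_*^j])$, and use the $\HFhat$-invariance from Proposition~\ref{TIBOKB filling is well defined} to conclude that no manifold in that set is an L-space, so $t=2$ witnesses NLS detection. The point you flag as requiring care is exactly the point the paper handles the same way.
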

\begin{proof}
In \cite[Proposition 7.9]{RR}, the second and third authors show that for any rational homology solid torus $M$, ``generalized solid torus'' $N'$
with rational longitude $\lambda'$,
and gluing map $\varphi : \partial N' \to \partial M$, the closed manifold $N' \cup_{\varphi}\mkern-2.5mu M$ is a non-L-space
if and only if 
$\varphi(\lambda') \notin \mathcal{L}_{M}^{\circ}$.
In the discussion preceding that proposition, they also prove that $N_t$ is a generalized solid torus for any $t>1$.
Thus, for a manifold with slope $\alpha_*$, the set
$\mathcal{M}_2(\emptyset, [\alpha_*])$ contains a non-L-space
if and only if for all $t > 1$, the set 
$\mathcal{M}_t(\emptyset, [\alpha_*])$ contains no L-spaces,
hence if and only if $\alpha_*$ is NLS detected.

By Proposition \ref{main proposition}, there is a collection of slopes $\alpha_*$ such that each $Y_j^{\alpha_*}$ is a non-L-space. For each $M_j$, where $1\le j \le n+1$, $Y_j^{\alpha_*}$ is by construction a non-L-space element of $\mathcal{M}_2( \emptyset; [\alpha_*^j] )$, where $\alpha_*^j$ denotes the restriction of $\alpha_*$ to $\partial M_j$.
Thus each $\alpha_*^j$ is NLS detected on $M_j$.
\end{proof}

\begin{remark}Notice that we have yet to restrict to graph manifolds, or even to incompressible tori. Indeed, given a rational homology sphere $Y$ that is not an L-space, a collection of disjoint tori $\{T_i\}$ always gives rise to an NLS detected collection of slopes on the boundary of each component of $Y\cutting \{T_i\}$. This suggests that the same behaviour for taut foliations and/or for left-orders on the fundamental group should be explored
for more general prime three-manifolds.
\end{remark}

{\bf The proof of Theorem \ref{Main Theorem}.}
When $Y$ is a graph manifold and $\{T_i\}$ is the collection of JSJ tori, note that Proposition \ref{prop:collection of NLS slopes} verifies one direction of Boyer and Clay's Conjecture 1.10 in \cite{BC} about cutting and gluing along NLS-detected slopes, namely that a non-L-space graph manifold can be cut into Seifert fibered pieces with gluing-compatible NLS-detected slopes on all boundary components.  This allows us to complete the proof of Theorem \ref{Main Theorem}.

Suppose that $Y$ is a non-L-space graph manifold with $b_1(Y) = 0$. If $Y$ has a trivial JSJ decomposition, then $Y$ is a Seifert fibered and therefore already known to admit a cooriented taut foliation.  Next suppose $Y$ has a non-trivial JSJ decomposition. Since $b_1(Y) = 0$, every JSJ torus separates; we take $\{T_i\}$ to be JSJ tori such that the components of $Y\cutting\{T_i\}$ are Seifert fibered. By Proposition \ref{prop:collection of NLS slopes} there is a collection of slopes $\alpha_* = (\alpha_1, \ldots, \alpha_n)$, with $\alpha_i \in T_i$, such that the restrictions of $\alpha_*$ to each component of $Y\cutting\{T_i\}$ are NLS detected. In \cite[Theorem 8.1]{BC}, Boyer and Clay show that on Seifert fibered spaces, NLS detected slopes are equivalent to what they call ``foliation detected'' slopes.  Thus, we have finally produced foliation-detected slopes on all the JSJ tori decomposing $Y$ into Seifert fibered pieces, and that is precisely what Boyer and Clay's foliation gluing theorem \cite[Theorem 1.7]{BC}
requires, in order to guarantee the existence of a cooriented 
taut foliation on $Y$.
\hfill$\Box$

{\bf Coefficients.}
Up until this point, we have used Floer homology with coefficients in \(\bZ/2\bZ\). This choice was imposed  by our use of bordered Floer homology, which is only defined over \(\bZ/2\bZ\). We now briefly discuss what happens for other coefficient systems. If \(G\) is an abelian group, we say \(Y\) is a \(G\) L-space if 
\(\HFhat(Y,\mathfrak{s}; G) \cong G\) for all \(\mathfrak{s} \in spin^c(Y)\). For a closed orientable graph manifold \(Y\), we consider the following conditions: 

\begin{enumerate}
\item \(Y\) is a \(\bZ\) L-space. 
\item \(Y\) is a \(\bZ/2\bZ\) L-space.
\item \(Y\) does not admit a \(C^0\) coorientable taut foliation. 
\end{enumerate}

In fact, if \(Y\) is a closed graph manifold, these three conditions are equivalent. To see this, we briefly sketch the points at which our argument used \(\bZ/2\bZ\) coefficients. First, the proof of our enhanced gluing result Theorem \ref{Gluing Theorem} depends on bordered Floer homology, hence requires
\(\bZ/2\bZ\) coefficients.
For the proof of Theorem~\ref{Cutting Theorem}, however, 
Theorem \ref{Gluing Theorem} can be replaced with
\cite[Theorem 1.1]{RR}, which works over $\mathbb{Z}$ coefficients. The theorem of Lisca and Stipsicz uses \(\bZ/2\bZ\) coefficients; however for any manifold obtained by Dehn filling a Floer simple manifold, the properties of being a \(\bZ\) L-space and a \(\bZ/p\bZ\) L-space are equivalent \cite[Proof of Proposition 3.6]{RR}. 
Moreover, \cite[Theorem 5.1]{RR} reproves Lisca and Stipsicz's result over $\mathbb{Z}$ by performing a direct computation of the L-space slope interval for any Seifert fibered space over the disc.
Thus, Theorems \ref{Main Theorem}, \ref{Equivalence Theorem}, and
\ref{Cutting Theorem} hold over $\mathbb{Z}$.

{\bf Closing remarks.}
The first author has applied bordered Floer homology to give an algorithm for computing the Heegaard Floer homology of an arbitrary graph manifold \cite{Hanselman2013}. This has been implemented on computer, with considerable savings in computation time if the combinatorics developed in \cite{HW} are incorporated (see \cite[Remark 6.10]{HW} in particular). As a consequence of Theorem \ref{Equivalence Theorem} two questions are now algorithmically decidable:
\begin{itemize}
\item[] Does a graph manifold $Y$ admit a coorientable taut foliation?
\item[] Does a graph manifold $Y$ have a left-orderable fundamental group?
\end{itemize}
The answer to either question is {\em yes} if and only if $\rank\HFhat(Y)>\chi(\HFhat(Y))$ (recall that $\chi(\HFhat(Y))= |H_1(Y;\bZ)|$). This gives a direct, in fact combinatorial, verification of two conditions on a three-manifold that seem quite difficult to certify in general.  

\textbf{Acknowledgements:}{ We thank Jonathan Bowden, Steve Boyer, and Adam Clay  for sharing their preprints \cite{Bowden,BC-prep} with us;  Steve Boyer, Cameron Gordon, Tye Lidman, and the referees for helpful comments on a previous version of the manuscript. }

\bibliographystyle{alpha}
\bibliography{bibliography}

\end{document}